\newtheorem{theorem}{Theorem}
\newtheorem{lemma}[theorem]{Lemma}
\newtheorem{corollary}[theorem]{Corollary}
\newtheorem{problem}{Problem}
\title[The subalgebra of graded central polynomials of an associative algebra]{The subalgebra of graded central polynomials \\ of an associative algebra}
\author{Galina Deryabina}
\address{Department of Computational Mathematics and Mathematical Physics (FS-11), Bauman Moscow State Technical University, 2-nd Baumanskaya Street, 5, 105005 Moscow, Russia}
\email{galina\_deryabina@mail.ru}
\author{Alexei Krasilnikov}
\address{Departamento de Matem\'atica, Universidade de Bras\'\i lia, 70910-900 Bras\'\i lia, DF, Brasil}
\email{alexei@unb.br}
\date{}
\begin{document}

\begin{abstract}
Let $F$ be a field and let $F \langle X \rangle$ be the free unital associative $F$-algebra on the free generating set $X = \{ x_1, x_2, \dots \}$. A subalgebra (a vector subspace) $V$ in $F \langle X \rangle$ is called a \textit{$T$-subalgebra} (a \textit{$T$-subspace}) if $\phi (V) \subseteq V$ for all endomorphisms $\phi$ of $F \langle X \rangle$. For an algebra $G$, its central polynomials form a $T$-subalgebra $C(G)$ in $F \langle X \rangle$. Over a field of characteristic $p > 2$ there are algebras $G$ whose algebras of all central polynomials $C (G)$ are not finitely generated as \textit{$T$-subspaces} in $F \langle X \rangle$. However, no example of an algebra $G$ such that $C(G)$ is not finitely generated as a \textit{$T$-subalgebra} is known yet.

In the present paper we construct the first example of a $2$-graded unital associative algebra $B$ over a field of characteristic $p>2$ whose algebra $C_2 (B)$ of all $2$-graded central polynomials is not finitely generated as a $T_2$-subalgebra in the free $2$-graded unital associative $F$-algebra $F \langle Y,Z \rangle$. Here $Y = \{ y_1, y_2, \dots \}$ and $Z = \{ z_1, z_2, \dots \}$ are sets of even and odd free generators of $F \langle Y,Z \rangle$, respectively. We hope that our example will help to construct an algebra $G$ whose algebra $C(G)$ of (ordinary) central polynomials is not finitely generated as a $T$-subalgebra in $F \langle X \rangle$.
\end{abstract}

\subjclass[2010]{16R10, 16R40}

\keywords{Free associative algebra, Polynomial identity, Central polynomial, Graded associative algebra}

\maketitle

\section{Introduction}

Let $F$ be a field and let $F \langle X \rangle$ be the free unital associative $F$-algebra on the free generating set $X = \{ x_1, x_2, \dots \}$. Recall that a two-sided ideal $I$ in $F \langle X \rangle$ is called a \textit{$T$-ideal} if $\phi (I) \subseteq I$ for all endomorphisms $\phi$ of $F \langle X \rangle$. Similarly, a subalgebra (a vector subspace) $U$ in $F \langle X \rangle$ is called a \textit{$T$-subalgebra} (a \textit{$T$-subspace}) if $\phi (U) \subseteq U$ for all endomorphisms $\phi$ of $F \langle X \rangle$.

Let $G$ be a unital associative algebra over $F$. Recall that a polynomial $f(x_1,\ldots ,x_n) \in F \langle X \rangle$ is called a \emph{polynomial identity} in $G$ if $f(g_1, \ldots , g_n) = 0$ for all $g_1, \dots , g_n \in G$. One can easily check that, for a given algebra $G$, its polynomial identities form a T-ideal $T(G)$ in $F \langle X \rangle$. The converse also holds: for every T-ideal $I$ in $F \langle X \rangle$ there is an algebra $G$ such that $I = T(G)$, that is, $I$ is the ideal of all polynomial identities satisfied in $G$.

A polynomial $f(x_1,\ldots,x_n)\in F \langle X \rangle$ is \textit{a central polynomial} of $G$ if, for all $g_1, \dots , g_n \in G$, $f(g_1,\dots, g_n)$ is central in $G$. Clearly, $f = f(x_1, \dots, x_n)$ is a central polynomial of $G$ if and only if $[f, x_{n+1}]$ is a polynomial identity of $G$. For a given algebra $G$ its central polynomials form a $T$-subalgebra $C(G)$ in $F \langle X \rangle$. However, not every $T$-subalgebra in $F \langle X \rangle $ coincides with the $T$-subalgebra $C(G)$ of all central polynomials of any algebra $G$.

Let $I$ be a $T$-ideal in $F \langle X \rangle$. A subset $S \subset I$ \textit{generates $I$ as a $T$-ideal} if $I$ is the minimal $T$-ideal  in $F \langle X \rangle$ containing $S$. The $T$-subalgebra and the $T$-subspace of $F \langle X \rangle$ generated by $S$ (as a $T$-subalgebra and a $T$-subspace, respectively) are defined in a similar way. Clearly, the $T$-ideal ($T$-subalgebra, $T$-subspace) generated by $S$ is the ideal (the subalgebra, the vector subspace) in $F \langle X \rangle$ generated by all polynomials $f(a_1,\ldots, a_m)$, where $f=f(x_1, \ldots , x_m) \in S$ and $a_i\in F \langle X \rangle$ for all $i$.

We refer to \cite{drbook, gz, k-brbook, rowenbook} for further terminology and basic results concerning $T$-ideals and algebras with polynomial identities and to \cite{BOR10, BKKS, GKS12, GKS14, GrishinTsybulya09, k-brbook} for an account of results concerning $T$-subspaces and $T$-subalgebras.

Let $F$ be a field of characteristic $0$. Then for each associative $F$-algebra $G$ (unital or not) its ideal of polynomial identities $T(G)$ is a finitely generated $T$-ideal and its subalgebra of central polynomials $C(G)$ is a finitely generated $T$-subspace (and thus a finitely generated $T$-subalgebra). This is because, by Kemer's solution of the Specht problem \cite{kemerbook},  over a field $F$ of characteristic $0$ each $T$-ideal in $F \langle X \rangle$ is finitely generated (as such). Moreover, over such a field $F$ each $T$-subspace (and, therefore, each $T$-subalgebra) in $F \langle X \rangle$ is finitely generated; this has been proved more recently by Shchigolev \cite{Shchigolev01}.

On the other hand, over a field $F$ of characteristic $p>0$ there are associative algebras $G$ such that their ideals of polynomial identities  $T(G)$   are not finitely generated as $T$-ideals in $F \langle X \rangle$. This has been proved by Belov \cite{Belov99}, Grishin \cite{Grishin99} and Shchigolev \cite{Shchigolev99} (see also \cite{Belov00, Grishin00, k-brbook}).

Over a field $F$ of characteristic $p>2$ there are also associative algebras $G$ such that their subalgebras $C(G)$ of central polynomials are not finitely generated as \textit{$T$-subspaces} in $F \langle X \rangle$. In fact, the infinite dimensional Grassmann algebra $E$ over an infinite field $F$ of characteristic $p>2$ is such an algebra: its vector space $C(E)$ of central polynomials is a non-finitely generated \textit{$T$-subspace} in $F \langle X \rangle$ (see \cite{BOR10,BKKS,Grishin10}). However, $C(E)$ is finitely generated as a \textit{$T$-subalgebra} in $F \langle X \rangle$. To the best of our knowledge the following problem is still open.

\begin{problem}
\label{problem1}
Let $F$ be a field of characteristic $p>0$. Find an associative (unital) $F$-algebra $B$ such that its subalgebra of central polynomials $C(B)$ is not finitely generated as a $T$-subalgebra in $F \langle X \rangle$.
\end{problem}

Note that over an infinite field of characteristic $p>2$ many $T$-subalgebras in $F \langle X \rangle$ are known to be non-finitely generated, see \cite{GKS14,Shchigolev00}. Moreover, such non-finitely generated $T$-subalgebras exist in $F \langle x_1, \ldots , x_n \rangle$, where $n>1$ (see \cite{GKS14,Shchigolev00}). However, these non-finitely generated $T$-subalgebras do not coincide with the subalgebra $C(G)$ of all central polynomials of any algebra $G$.

It is worth to mention that if $R$ is a Noetherian unital associative and commutative ring then each $T$-ideal in $R \langle x_1, \ldots , x_n \rangle$ $(n \ge 1)$ is finitely generated; this has been proved recently by Belov \cite{Belov10}.

\medskip
Recall that if $H$ is an additive group and $G$ is an $F$-algebra then $G$ is \textit{$H$-graded} if $G = \oplus_{h\in H} G_h$ where $G_h$ are vector subspaces of $G$ and $G_h G_{h'} \subseteq G_{h+h'}$ for every $h, h' \in H$. Note that $G_0$ is a subalgebra of $G$. In this paper, unless otherwise stated, we fix $H = \mathbb Z / 2 \, \mathbb Z$ so $G = G_0 \oplus G_1$. We refer to the elements of $G_0$ as even ones and to those of $G_1$ as odd ones; the adjective \textit{$2$-graded} will stand for $(\mathbb Z / 2 \, \mathbb Z)$-graded.

Let $Y = \{ y_1, y_2, \dots \}$, $Z= \{ z_1, z_2, \dots \}$. Let $A = F \langle Y, Z \rangle$ be the free unital associative algebra over $F$ with a free generating set $Y \cup Z$. Define a $2$-grading on $A$ by setting $y_i \in A_0$, $z_j \in A_1$ for all $i,j$. It is clear that $A_0$ is the linear span of all monomials in variables $y_i, z_j$ that contain even number of variables $z_j \in Z$ and $A_1$ is spanned by the monomials that contain odd number of variables $z_j$. We have $A = A_0 \oplus A_1$; $A_0 A_0, A_1 A_1 \subseteq A_0$; $A_1 A_0, A_0 A_1 \subseteq A_1$.

A two-sided ideal $I$ in $A$ is called a \textit{$T_2$-ideal} if $\phi (I) \subseteq I$ for all $2$-graded endomorphisms $\phi$ of $A$, that is, for all endomorphisms $\phi$ such that $\phi (A_0) \subseteq A_0$, $\phi (A_1) \subseteq A_1$. Similarly, a subalgebra (a vector subspace) $U$ in $A$ is called a \textit{$T_2$-subalgebra} (a \textit{$T_2$-subspace}) if $\phi (U) \subseteq U$ for all $2$-graded endomorphisms $\phi$ of $A$.

Let $G = G_0 \oplus G_1$ be a $2$-graded unital associative algebra over $F$. Recall that a polynomial $f(y_1, y_2, \dots ; z_1, z_2, \dots ) \in A$ is called a \textit{$2$-graded polynomial identity} in $G$ if $f(g_1, g_2, \dots ; g_1', g_2', \dots ) = 0$ for all $g_1, g_2, \dots  \in G_0$, $g_1', g_2' \dots \in G_1$. One can easily check that, for a given $2$-graded algebra $G$, its $2$-graded polynomial identities form a $T_2$-ideal $T_2(G)$ in $A$. The converse also holds: for every $T_2$-ideal $I$ in $A$ there is a $2$-graded algebra $G$ such that $I = T_2(G)$, that is, $I$ is the ideal of all $2$-graded polynomial identities satisfied in $G$.

A polynomial $f(y_1, y_2, \dots ; z_1, z_2, \dots ) \in A$ is \textit{a $2$-graded central polynomial} of $G$ if, for all $g_1, g_2, \dots \in G_0$ and all $g_1', g_2', \dots \in G_1$, $f(g_1,g_2, \dots; g_1', g_2', \dots )$ is central in $G$. For a given $2$-graded algebra $G$ its $2$-graded central polynomials form a $T_2$-subalgebra $C_2(G)$ in $A$. However, not every $T_2$-subalgebra in $A$ coincides with the $T_2$-subalgebra $C_2(G)$ of all $2$-graded central polynomials of any algebra $G$.

Let $I$ be a $T_2$-ideal in $A$. A subset $S \subset I$ \textit{generates $I$ as a $T_2$-ideal} if $I$ is the minimal $T_2$-ideal  in $A$ containing $S$. A $T_2$-subalgebra and a $T_2$-subspace of $A$ generated by $S$ (as a $T_2$-subalgebra and a $T_2$-subspace, respectively) are defined in a similar way.

Graded identities is a powerful tool for studying PI algebras. They play an essential role in the structure theory of the $T$-ideals developed by Kemer, see \cite{kemerbook}. Soon after Kemer's achievments  graded identities became object of extensive studies. We refer to \cite{gz} for further terminology, basic results and reference concerning $T_2$-ideals, graded polynomial identities and graded central polynomials.

The aim of our paper is to solve the following (simpler) graded analog of Problem \ref{problem1}.

\begin{problem}
Let $F$ be a field of characteristic $p>0$. Find a $2$-graded associative (unital) $F$-algebra $B$ such that its subalgebra of $2$-graded central polynomials $C_2(B)$ is not finitely generated as a $T_2$-subalgebra in $A$.
\end{problem}

We hope that our example will help to solve Problem \ref{problem1}, that is, to construct an algebra $G$ whose algebra $C(G)$ of (ordinary) central polynomials is not finitely generated as a $T$-subalgebra in $F \langle X \rangle$.

\medskip
Let $T$ be the (two-sided) ideal in $A$ generated by all polynomials $[a_1, a_2, a_3]$ $(a_i \in A)$. Clearly, $T$ is a $T$-ideal and, therefore, a $T_2$-ideal in $A$.

Let $A^{(k)}$ $(k = 0,1,2, \dots )$ be the linear span of all monomials in variables $y_i \in Y, z_j \in Z$ that are of degree $k$ in the variables $z_j$ $(j = 1,2, \dots )$. For example, $y_1 z_2 y_3 z_4 z_5 \in A^{(3)}$. Then $A = \oplus_{i = 0}^{\infty} A^{(i)}$. Define $I_k = \sum_{i \ge k} A^{(i)}$ $(k = 1,2, \dots )$. It is clear that, for each $k$, $I_k$ is a $T_2$-ideal in $A$.

Define $U = (T \cap I_p) + I_{p+1} $. Let $B = A / U$. Since $U$ is a $2$-graded ideal in $A$, the quotient algebra $B$ is a $2$-graded unital associative algebra with the $2$-grading inherited from $A$, $B = B_0 \oplus B_1$, $B_0 = (A_0 + U)/U$, $B_1 = (A_1 + U)/U$. Our main result is as follows.

\begin{theorem}
\label{maintheorem}
Let $F$ be an infinite field of characteristic $p>2$. Then the algebra $C_2 (B)$ of all $2$-graded central polynomials of $B$ is not finitely generated as a $T_2$-subalgebra in $A = F \langle Y, Z \rangle$.
\end{theorem}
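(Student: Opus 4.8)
The plan is to exploit the $z$-grading $A=\bigoplus_{d\ge 0}A^{(d)}$ to turn the statement about $T_2$-\emph{subalgebras} into a statement about $T_2$-\emph{subspaces}, and then to settle the latter by an explicit family together with a characteristic-$p$ obstruction. First I would record that $U=T_2(B)$ is a $z$-graded $T_2$-ideal: each $I_k$ is $z$-graded, and $T$ is generated by the $z$-homogeneous elements $[a_1,a_2,a_3]$, so $T\cap I_p$ and hence $U$ are $z$-graded. Consequently $C:=C_2(B)$ is $z$-graded, and I would compute its homogeneous components. A polynomial $f$ is a $2$-graded central polynomial of $B$ iff $[f,y]\in U$ and $[f,z]\in U$ for fresh even/odd variables $y,z$. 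For $f\in A^{(d)}$ with $1\le d\le p-1$ one has $[f,y]\in A^{(d)}$, while $A^{(d)}\cap U=0$ because every $z$-homogeneous component of $U$ has $z$-degree $\ge p$; thus $[f,y]=0$ in the free algebra $A$, forcing $f\in F\cdot 1$, so $C\cap A^{(d)}=0$ for $1\le d\le p-1$ and $C\cap A^{(0)}=F\cdot 1$. For $d\ge p+1$ we have $A^{(d)}\subseteq I_{p+1}\subseteq U$, hence $A^{(d)}\subseteq C$. Finally, for $d=p$ the condition $[f,z]\in A^{(p+1)}\subseteq U$ is automatic and $[f,y]\in A^{(p)}$, so, since $A^{(p)}\cap U=T\cap A^{(p)}$,
\[
C_2(B)=F\cdot 1\ \oplus\ W\ \oplus\ I_{p+1},\qquad W:=\{\,f\in A^{(p)}:[f,y]\in T\,\},
\]
and $W$ contains every product of commutators of $z$-degree $p$ (such products are \emph{central modulo} $T$, since commutators are central in $A/T$).

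Next I would pass to $\widetilde A:=A/I_{p+1}$, in which the $z$-degree is bounded by $p$. Since $I_{p+1}$ is a $T_2$-ideal contained in $C$, the image $\widetilde C=C/I_{p+1}$ is a $T_2$-subalgebra of $\widetilde A$, and $\widetilde C=F\cdot 1\oplus\widetilde W$, where $\widetilde W$ is the image of $W$. The key point is that the $z$-degree-$p$ slice of $\widetilde A$ is a genuine $T_2$-subspace: a $2$-graded endomorphism can only raise the $z$-degree, and any excess is killed in $\widetilde A$, so $\widetilde W$ is closed under $2$-graded substitutions. Now suppose, for contradiction, that $C$ is finitely generated as a $T_2$-subalgebra; then $\widetilde C$ is finitely generated as a $T_2$-subalgebra of $\widetilde A$, and replacing the generators by their $z$-homogeneous components (which lie in $\widetilde C$, hence have $z$-degree $0$ or $p$) I may take a finite $z$-homogeneous generating set. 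Every element of $\widetilde C$ is a linear combination of products of $2$-graded substitution instances of these generators; an instance of a degree-$0$ generator is a scalar, an instance of a degree-$p$ generator has $z$-degree exactly $p$, and a product of two instances of degree-$p$ generators has $z$-degree $\ge 2p>p$ and so vanishes in $\widetilde A$. Hence the $z$-degree-$p$ part of any such product is a scalar multiple of a single instance of a degree-$p$ generator, and $\widetilde W$ coincides with the $T_2$-subspace generated by the finitely many degree-$p$ generators. Therefore it suffices to prove that $\widetilde W$ is \emph{not} a finitely generated $T_2$-subspace.

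To do this I would produce an explicit sequence $f_1,f_2,\dots\in\widetilde W$ of products of commutators of $z$-degree $p$ whose even ($y$-)part grows with $n$ — for instance a fixed block of $z$-degree $p$ built from the $p$ odd variables $z_1,\dots,z_p$, multiplied by a block of $n$ commutators in distinct even variables — and show that, once $n$ exceeds a prescribed bound, $f_n$ cannot be obtained from polynomials in a bounded number of variables via $2$-graded substitutions and linear combinations. The characteristic-$p$ phenomenon enters precisely through the $p$ odd variables: modulo $T$ the relevant evaluations behave like those in the infinite-dimensional Grassmann algebra $E$, whose space of central polynomials $C(E)$ is a non-finitely generated $T$-space over an infinite field of characteristic $p>2$ (see \cite{BOR10,BKKS,Grishin10}). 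Concretely I would map $A$ onto $E$ by sending the $y_i$ to even and the $z_j$ to odd elements, transport the family $f_n$, and separate $f_n$ from everything obtainable from finitely many generators by means of a linear functional detecting the top commutator-degree part of the $z$-degree-$p$ slice, in the spirit of the obstructions used in \cite{Grishin99,Shchigolev99}.

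The hard part will be this last step: rigorously certifying that $f_n$ escapes the $T_2$-subspace generated by any finite set. This requires a faithful invariant of the leading ($z$-degree-$p$, top commutator-degree) component that is additive under the substitutions available in a $T_2$-subspace and that separates the $f_n$; the delicate combinatorial core is controlling how the $p$ odd arguments may be distributed among a bounded number of substitution slots, where the vanishing of the binomial coefficients $\binom{p}{i}$ for $0<i<p$ is essential. A secondary technical point is to pin down $\widetilde W$ exactly, namely to verify that, modulo the identities $T\cap A^{(p)}$, every $2$-graded central polynomial of $z$-degree $p$ is a linear combination of products of commutators; this ensures that no unforeseen central polynomial of $z$-degree $p$ can help generate $\widetilde W$ and thereby circumvent the invariant.
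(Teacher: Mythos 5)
Your structural reduction is sound and is essentially the route the paper takes: you compute $C_2(B)=F\cdot 1\oplus(C\cap A^{(p)})\oplus I_{p+1}$ (the paper's Lemma \ref{C_2(B)}), and you use the fact that the product of two elements of $z$-degree $p$ lies in $I_{p+1}$ to convert finite generation as a $T_2$-\emph{subalgebra} into finite generation of the degree-$p$ slice as a $T_2$-\emph{subspace}; this is exactly the ``null multiplication'' observation the paper relies on. The gap is in the decisive step, which you leave as a plan rather than a proof, and moreover the explicit witness family you propose would not work. A fixed $z$-block of degree $p$ multiplied by $n$ commutators $[y_1,y_2]\cdots[y_{2n-1},y_{2n}]$ in \emph{distinct} even variables is multilinear in each $y_i$; since $\deg_{y_i}=1$ is not divisible by $p$, such a polynomial lies, modulo $T$, in the linear span of single commutators $[g_1,g_2]$ (this is \cite[Lemma 12]{BKKS}, invoked in the paper's Lemma \ref{C}), hence in the $T_2$-subspace generated by the two polynomials $[x_1,x_2]$ and $x_1[x_2,x_3,x_4]$. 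So your entire family sits inside one finitely generated $T_2$-subspace and cannot certify non-finite generation.

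The characteristic-$p$ obstruction only appears when every variable occurs with degree divisible by $p$: the correct witnesses are the substitution instances
\[
q_n(z_1,y_2,\dots,y_{2n})=z_1^{p-1}[z_1,y_2]\,y_2^{p-1}\cdots y_{2n-1}^{p-1}[y_{2n-1},y_{2n}]\,y_{2n}^{p-1},
\]
built from blocks $x^{p-1}[x,x'](x')^{p-1}$ rather than plain commutators. Even with the right family, the non-membership statement ($q_{n+1}\notin D_n$, where $D_n$ is the $T$-subspace generated by the central polynomials with at most $n$ such blocks) is a genuinely hard combinatorial fact; the paper does not reprove it but imports it from Shchigolev \cite[Lemma 13]{Shchigolev00} as Lemma \ref{q_n+1_D_n}, and then needs Lemma \ref{D_n} (an explicit spanning set for $D_n$) to conclude that the subspaces $D_n\cap A^{(p)}$, and hence the $T_2$-subalgebras $W_n=1\cdot F+(D_n\cap I_p)+I_{p+1}$, form a strictly increasing chain with union $C_2(B)$. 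Your proposal would need to supply, or explicitly cite, an equivalent of these two lemmas to be complete; the linear-functional obstruction you sketch ``in the spirit of'' \cite{Grishin99,Shchigolev99} is precisely the nontrivial content being deferred.
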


The idea of the proof is as follows. We will prove that the image $(C_2 (B) + U)/U$ of the algebra $C_2 (B)$ of the central polynomials of $B$ is not finitely generated as a \textit{$T_2$-subspace} in $A/U$. To prove this we will make use of the description of the central polynomials of the unital infinite-dimensional Grassmann algebra $E$ over an infinite field $F$ of characteristic $p>2$ obtained in \cite{BOR10,BKKS,Grishin10}.

On the other hand, we will check that, up to a scalar term, $(C_2 (B) + U)/U$ is an algebra with null multiplication. It follows that any set containing the unity $1$ that generates $(C_2 (B) + U)/U$ as a \textit{$T_2$-subalgebra} also generates it as a \textit{$T_2$-subspace}. Since $(C_2 (B) + U)/U$ is not finitely generated as a $T_2$-subspace, $(C_2 (B) + U)/U$ is not finitely generated as a $T_2$-subalgebra in $A/U$ as well. It follows that $C_2 (B)$ is not finitely generated as a $T_2$-subalgebra in $A$, as required.

The paper is organized as follows. In Section 2 we state and prove some results about (ordinary) central polynomials of the Grassmann algebra $E$ that we need to prove the main result. In Section 3 we give a proof of Theorem \ref{maintheorem}.


\section{The central polynomials of the Grassmann algebra}

Let $F$ be an infinite field of characteristic $p>2$. Define $X = Y \cup Z$, $x_{2 i -1} = y_i$, $x_{2i} = z_i$ $(i \in \mathbb N)$. Then $X = \{ x_1, x_2, \dots \}$ and $A = F \langle X \rangle$ is the free unital associative $F$-algebra on the free generating set $X$.

Let $E$ be the infinite-dimensional unital Grassmann algebra over $F$. Then $E$ is generated by elements $e_i$ $(i = 1, 2, \dots )$ such that $e_i e_j = - e_j e_i$, $e_i^2 = 0$ for all $i, j$ and the set
\[
\{ e_{i_1} e_{i_2} \dots e_{i_k} \mid k \ge 0, \, i_1 < i_2 < \dots < i_k \}
\]
forms a basis of $E$ over $F$. Let $T(E)$ be the $T$-ideal of all (ordinary) polynomial identities of $E$. Then $T(E) = T$ (see, for instance, \cite{GiambrunoKoshlukov01}).

The $T$-subspace $C = C(E)$ of all (ordinary) central polynomials of $E$ was described in \cite{BOR10,BKKS,Grishin10}. Let $q(x_1, x_2) = x_1^{p-1} [x_1, x_2] x_2^{p-1}$ and let, for each $n \ge 1$,
\[
q_n = q_n (x_1, \dots , x_{2n}) = q(x_1, x_2) q(x_3, x_4) \dots q(x_{2n-1}, x_{2n}).
\]
The $T$-subspace $C(E)$ is generated (as a $T$-subspace in $A$) by the polynomial $x_1[x_2, x_3, x_4]$ together with the polynomials $x_0^p, x_0^p q_1, x_0^p q_2, \dots , x_0^p q_n, \dots $ (see \cite{BOR10,BKKS,Grishin10}).

Let $M \subset A$ be the set of monic (non-commutative) monomials in $x_i$ $(i \in \mathbb N)$,
\[
M = \{ x_{i_1} x_{i_2} \dots x_{i_{\ell}} \mid l \ge 0, i_s \in \mathbb N \mbox{ for all } s \} .
\]
The following lemma can be deduced, for instance, from \cite[Proof of Theorem 2]{BKKS}.

\begin{lemma}
\label{C}
The vector subspace $C$ is spanned (as a vector space over $F$) by all polynomials $g_1 [g_2, g_3, g_4]$ and $[g_1, g_2]$ $(g_i \in M)$ together with the polynomials
\begin{equation}
\label{product}
x_{i_1}^{p m_1} x_{i_2}^{p m_2} \dots \, x_{i_k}^{p m_k} \ x_{j_1}^{p-1} [x_{j_1}, x_{j_2}] x_{j_2}^{p-1} x_{j_3}^{p-1} [x_{j_3}, x_{j_4}] x_{j_4}^{p-1} \dots \, x_{j_{2 \ell - 1}}^{p-1} [x_{j_{2 \ell -1}}, x_{j_{2 \ell}}] x_{j_{2 \ell }}^{p-1}
\end{equation}
where $k, \ell \ge 0$, $i_1 < i_2  \dots < i_k$, $j_1 < j_2 < \dots < j_{2 \ell}$, $m_i > 0$ for all $i$.
\end{lemma}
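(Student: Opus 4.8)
The plan is to combine two ingredients: that the first family already spans the $T$-ideal $T = T(E)$, and that modulo $T$ the space $C$ is spanned by the other two families. Throughout, write $W$ for the span of the three families appearing in the statement.

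The inclusion $W \subseteq C$ is the easy half, since $C = C(E)$ is the space of \emph{all} central polynomials of $E$ and, as $p>2$, the centre of $E$ equals $E_0$. Writing $a = a_0 + a_1$ with $a_0 \in E_0$, $a_1 \in E_1$, one has $a_1^2 = 0$ and $a_0$ central, whence $a^{pm} = a_0^{pm} \in E_0$ and $a^{p-1}[a,b]b^{p-1} = 2\,a_0^{p-1}b_0^{p-1}a_1 b_1 \in E_0$; thus every product (\ref{product}) evaluates into $E_0$. Moreover $[g_1,g_2]$ evaluates into $[E,E]\subseteq E_0$, and $g_1[g_2,g_3,g_4]$ evaluates to $0$ because $[g_2,g_3]$ is already central in $E$. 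Hence all three families consist of central polynomials and $W \subseteq C$.

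For the reverse inclusion I would first observe that the first family spans $T$: each $g_1[g_2,g_3,g_4]$ lies in $T$, and conversely, using the identity $[a,b,c]d = [a,b,cd] - c[a,b,d]$ (and iterating it) one rewrites every generator $u[a_1,a_2,a_3]v$ of $T$ as a combination of left-normed elements $g_1[g_2,g_3,g_4]$, so that $\mathrm{span}\{g_1[g_2,g_3,g_4]\} = T$. Since $C$ is the $T$-subspace generated by $x_1[x_2,x_3,x_4]$ (which lies in $T$) and by the polynomials $x_0^p q_n$ $(n \ge 0)$, it now suffices to prove that each substitution instance of $x_0^p q_n$ is, modulo $T$, an $F$-combination of the second and third families; together with $\mathrm{span}\{g_1[g_2,g_3,g_4]\} = T$ this yields $C = T + \mathrm{span}\{\text{families 2 and 3}\} = W$.

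I would carry out this last step in the relatively free algebra $A/T$, where the arithmetic becomes tame: every commutator is central, every triple commutator vanishes, and $[x_1,x_2]^2 \in T$, so commutators are nilpotent. As $F$ is infinite and $W$ is spanned by multihomogeneous elements, it is enough to treat substitutions by monomials. The key simplification is that in characteristic $p$ the $p$-th power map is additive modulo $T$ (the Jacobson corrections in $(u+v)^p$ are iterated commutators of length $p\ge 3$ and hence vanish), each $\overline{w^p}$ is central and equals the product of the $p$-th powers of the distinct variables of $w$, and $\overline{(uv)^p}=\overline{u^pv^p}$ since $\binom{p}{2}\equiv 0$; thus the prefix $\overline{a_0^{\,p}}$ of a substituted $x_0^p q_n$ contributes exactly factors $x^{pm}$ of (\ref{product}), with no corrections. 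What remains is to reduce the factors $q(a,b)=a^{p-1}[a,b]b^{p-1}$ coming from $q_n$; using the centrality and nilpotence of commutators these collapse, modulo $T$, into $q$-type factors of (\ref{product}) and monomial commutators $[g_1,g_2]$ (for instance $[x_1 x_2^{p-1}, x_2] \equiv [x_1,x_2]x_2^{p-1}$ shows how the second family absorbs the stray terms). I expect the main obstacle to be precisely this bookkeeping --- checking that all the corrections reassemble into products (\ref{product}) and commutators $[g_1,g_2]$ with the required orderings $i_1 < \dots < i_k$, $j_1 < \dots < j_{2\ell}$, and that nothing outside $W$ survives. This is the computation carried out in \cite[Proof of Theorem 2]{BKKS}.
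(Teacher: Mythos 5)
Your argument is correct in outline but takes a genuinely different route from the paper's. You argue ``bottom-up'': you take the known generators $x_1[x_2,x_3,x_4]$ and $x_0^p q_n$ of $C$ as a $T$-subspace, observe that the first family of the lemma spans the $T$-ideal $T$, and then reduce substitution instances of $x_0^p q_n$ modulo $T$. The paper argues ``top-down'': it takes an arbitrary multihomogeneous $f \in C$, expands $f+T$ in the explicit basis of $A/T$ (ordered monomials times products of distinct commutators), and splits into two cases according to whether some $\deg_{x_i} f$ is prime to $p$ (then \cite[Lemma 12]{BKKS} places $f+T$ in the span of the $[g_1,g_2]+T$) or all degrees are divisible by $p$ (then every basis element of that multidegree is already congruent modulo $T$ to a polynomial of the form (\ref{product})). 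The paper's route thus never invokes the $T$-subspace generators of $C(E)$, only the basis of $A/T$ and one lemma; yours needs the generation result plus the reduction of its consequences. Both proofs defer the computational core to \cite[Proof of Theorem 2]{BKKS}, so yours is acceptable at the same level of detail, but one step deserves a caveat: ``it is enough to treat substitutions by monomials'' is not quite right as stated, because over an infinite field the $T$-subspace generated by $x_0^p q_n$ is spanned by the multihomogeneous components of $a_0^p q_n(a_1,\dots,a_{2n})$ with the $a_i$ sums of monomials, and those components are \emph{partial linearizations} of $x_0^p q_n$ evaluated at monomials, not substitution instances by monomials. Your Jacobson argument does kill the cross terms of $(u+v)^p$, but the components of $q(u+v,w)$ of intermediate degree in $u$ still have to be reduced by hand; this is precisely the bookkeeping you flag, though it is somewhat heavier than reassembling monomial substitutions, and the paper's basis-of-$A/T$ case analysis sidesteps it entirely.
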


\begin{proof}[Sketch of proof.] It is clear that all polynomials $g_1[g_2,g_3,g_4]$ and $[g_1,g_2]$ $(g_i \in M)$ belong to $C$; it is well known and straightforward to check that the polynomials of the form (\ref{product}) also belong to $C$. Thus, to prove Lemma \ref{C} it suffices to check that each polynomial $f \in C$ belongs to the linear span of the polynomials $g_1[g_2,g_3,g_4]$ and $[g_1,g_2]$ $(g_i \in M)$ and the polynomials of the form (\ref{product}).

It is well known (see, for example, \cite[Proposition 9]{BKKS}) that  the vector space $A/T$ over $F$ has a basis formed by the elements
\[
x_{i_1}^{n_1} x_{i_2}^{n_2} \dots \, x_{i_k}^{n_k} \ [x_{j_1}, x_{j_2}] [x_{j_3}, x_{j_4}] \dots \, [x_{j_{2 \ell -1}}, x_{j_{2 \ell}}]  + T
\]
where $k, \ell \ge 0$, $i_1 < i_2 , \dots < i_k$, $j_1 < j_2 < \dots < j_{2 \ell}$, $n_i > 0$ for all $i$.

Let $f \in C$ be an arbitrary element of $C$. Since the field $F$ is infinite, the $T$-subspace $C$ is spanned by multi-homogeneous polynomials so we may assume without loss of generality that $f$ is multi-homogeneous. For all $i$, let $d_i$ be the degree of $f$ with respect to $x_i$, $d_i = \deg_{x_i} f$. If, for some $i$, $p$ does not divide $d_i$ then, by \cite[Lemma 12]{BKKS}, $f +T$ belongs to the vector space of $A/T$ spanned by the polynomials $[g_1,g_2] +T$ where $g_j \in A$ or, equivalently, where $g_j \in M$. If, on the other hand, $p$ divides $d_i$ for all $i$ then one can check that $f + T = g +T$ for some linear combination $g$ of polynomials of the form (\ref{product}). It follows that $C/T$ is spanned by the polynomials $[g_1,g_2] + T$ $(g_j \in M)$ and the polynomials $h + T$ where $h$ is of the form (\ref{product}). Since $T$ is spanned by the polynomials $g_1 [g_2, g_3, g_4]$ $(g_j \in M)$, the result follows. See \cite[Proof of Theorem 2]{BKKS} for details.
\end{proof}

Let $D_{n}$ be the vector subspace of $A$ generated by all polynomials $g_1 [g_2, g_3, g_4]$ and $[g_1, g_2]$ $(g_i \in A)$ together with all polynomials
\begin{equation}
\label{product_Dn}
g_{1}^{p m_1} g_{2}^{p m_2} \dots \, g_{k}^{p m_k} \ h_{1}^{p-1} [h_{1}, h_{2}] h_{2}^{p-1} h_{3}^{p-1} [h_{3}, h_{4}] h_{4}^{p-1} \dots \, h_{{2 \ell - 1}}^{p-1} [h_{{2 \ell -1}}, h_{{2 \ell}}] h_{{2 \ell }}^{p-1} \qquad (g_i, h_j \in A)
\end{equation}
such that $k \ge 0$, $0 \le \ell \le n$. It is clear that, for each $n \ge 0$,  $D_n$ is a $T$-subspace in $A$. Note that, for each $n$, $D_n \subset C$. Indeed, each polynomial (\ref{product_Dn}) is a homomorphic image of a polynomial (\ref{product}). By Lemma \ref{C}, each polynomial (\ref{product}) belongs to $C$; since $C$ is a $T$-subspace, all homomorphic images of polynomials (\ref{product}) also belong to $C$. Hence, all polynomials (\ref{product_Dn}) belong to $C$. It follows that, for each $n$, $D_n \subset C$, as claimed. On the other hand, it is clear that $C \subseteq \bigcup_{n \ge 0} D_n$ and, therefore,  $C = \bigcup_{n \ge 0} D_n$.

The following lemma is an immediate corollary of Shchigolev's result \cite[Lemma 13]{Shchigolev00} (see also \cite[Proposition 13]{BKKS}). It is worth to mention that this result of \cite{Shchigolev00} has been used in \cite{Belov99, Shchigolev99} (see also \cite{Belov00, k-brbook}) to construct the first examples of non-finitely generated $T$-ideals in $F \langle X \rangle$ over a field of characteristic $p>2$.

\begin{lemma}
\label{q_n+1_D_n}
For each $n \ge 0$, $q_{n+1} \notin D_n$.
\end{lemma}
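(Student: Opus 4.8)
The plan is to pass to the relatively free algebra $A/T = A/T(E)$ and there separate $q_{n+1}$ from the image of $D_n$ by invoking the cited technical result. The first step is to note that $T \subseteq D_n$: indeed, $T$ is spanned by the products $g_1[g_2,g_3,g_4]$ $(g_i \in A)$, and all of these occur among the generators of $D_n$. Consequently $q_{n+1} \in D_n$ if and only if $q_{n+1}+T \in (D_n+T)/T$, so it suffices to disprove the latter membership. I would work with the standard basis of $A/T$ recalled in the proof of Lemma \ref{C}, namely the cosets of the elements $x_{i_1}^{n_1} x_{i_2}^{n_2} \dots \, x_{i_k}^{n_k} \, [x_{j_1}, x_{j_2}] \dots \, [x_{j_{2\ell-1}}, x_{j_{2\ell}}]$ with the usual ordering conditions.

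Next I would describe $(D_n+T)/T$ in terms of this basis. Modulo $T$ every generator $g_1[g_2,g_3,g_4]$ vanishes, so $(D_n+T)/T$ is spanned by the cosets of the commutators $[g_1,g_2]$ and of the products (\ref{product_Dn}) carrying at most $n$ factors of the form $q(h_{2t-1},h_{2t}) = h_{2t-1}^{p-1}[h_{2t-1},h_{2t}]h_{2t}^{p-1}$. Since $F$ is infinite and $D_n$ is a $T$-subspace, I may restrict attention to the multihomogeneous component carrying the multidegree of $q_{n+1}$. In this way the chain $D_0 \subseteq D_1 \subseteq \cdots \subseteq C$ is exhibited as a filtration of $C$ by the number of $q$-factors, with $q_{n+1}$ manifestly lying in $D_{n+1}$; the lemma is precisely the assertion that this filtration is strict at level $n$.

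Finally, the required separation is the content of Shchigolev's Lemma 13 in \cite{Shchigolev00} (equivalently \cite[Proposition 13]{BKKS}): that result states that $q_{n+1}$ cannot be written, modulo $T$, as a linear combination of substitution instances of $q_1,\dots,q_n$ (multiplied by $p$-th powers) together with commutators, i.e.\ that $q_{n+1}+T \notin (D_n+T)/T$. Applying it yields $q_{n+1}\notin D_n$. I expect the only genuine work to be the verification underlying the previous paragraph: that the general products (\ref{product_Dn}), carrying several $p$-th power factors $g_i^{p m_i}$ and arbitrary arguments $h_j \in A$, do fall within the scope of the canonical family handled by the cited lemma. This step is delicate because, modulo $T$, $p$-th powers and powers of commutators interact intricately (for instance, $[a,b]^2$ need not vanish for general $a,b\in A$), so one cannot simply count commutator brackets; it is exactly this reduction that Shchigolev's argument supplies, after which the conclusion is immediate.
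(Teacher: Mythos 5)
Your approach coincides with the paper's: the paper gives no independent proof of this lemma, deriving it directly from Shchigolev's Lemma~13 in \cite{Shchigolev00} (equivalently \cite[Proposition 13]{BKKS}), and your reduction to $A/T$ and the observation $T\subseteq D_n$ are exactly the implicit content of that citation. One caveat: you assert that the cited lemma \emph{states} that $q_{n+1}+T\notin D_n/T$, whereas the paper is careful to point out that the literal statement of \cite[Lemma 13]{Shchigolev00} only yields the weaker claim that for each $n$ some $q_{k(n)}$ with $k(n)>n$ lies outside $D_n$; the sharp index $k(n)=n+1$ has to be extracted from the \emph{proof} of that lemma. This does not invalidate your argument (and the weaker statement would in fact suffice for the main theorem), but as written you attribute to the reference slightly more than it literally provides.
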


Note that from the statement of \cite[Lemma 13]{Shchigolev00} one can deduce only a weaker assertion: for each $n \ge 0$, there exists $k(n) > n$ such that $q_{k(n)} \notin D_n$. However, it follows from the proof of \cite[Lemma 13]{Shchigolev00} that one can choose $k(n) = n + 1$.

Since $D_0 \subset D_1 \subset \dots \subset D_n \subset \dots $, Lemma \ref{q_n+1_D_n} implies he following.
\begin{corollary}
\label{q_k_D_n}
For each $n \ge 0$ and each $k >n$, $q_k \notin D_n$.
\end{corollary}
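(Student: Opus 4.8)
The plan is to reduce the general statement to the single case already furnished by Lemma \ref{q_n+1_D_n}, exploiting the monotonicity of the chain $D_0 \subset D_1 \subset \cdots \subset D_n \subset \cdots$. Fix $n \ge 0$ and $k > n$. The key observation is that $k > n$ forces $k - 1 \ge n$, whence the inclusion $D_n \subseteq D_{k-1}$ holds. It therefore suffices to establish the single non-membership $q_k \notin D_{k-1}$: any element lying outside the larger space $D_{k-1}$ automatically lies outside the smaller space $D_n$, so $q_k \notin D_{k-1}$ yields $q_k \notin D_n$ at once.

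To obtain $q_k \notin D_{k-1}$ I would simply apply Lemma \ref{q_n+1_D_n} with the index $n$ replaced by $k-1$: writing $q_k = q_{(k-1)+1}$, the lemma gives precisely $q_{(k-1)+1} \notin D_{k-1}$, as needed. Combining this with $D_n \subseteq D_{k-1}$ completes the proof. There is no genuine obstacle at this level: all of the difficulty is concentrated in Lemma \ref{q_n+1_D_n}, which in turn rests on Shchigolev's result \cite[Lemma 13]{Shchigolev00}. The only point requiring a small amount of care is to invoke the lemma at the correct index $k-1$ (so that the distinguished polynomial is exactly $q_k$) and to use the inclusion in the direction $D_n \subseteq D_{k-1}$, rather than the reverse; using it the wrong way round would give no information.
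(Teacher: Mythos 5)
Your argument is correct and is exactly the one the paper intends: it derives the corollary from Lemma \ref{q_n+1_D_n} applied at index $k-1$ together with the inclusion $D_n \subseteq D_{k-1}$ coming from the chain $D_0 \subset D_1 \subset \cdots$. The paper states this in a single sentence preceding the corollary; your write-up just makes the same reduction explicit.
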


Let $S^{(n)}$ be the set of all polynomials
\begin{equation*}
\label{product2}
x_{i_1}^{p m_1} x_{i_2}^{p m_2} \dots \, x_{i_k}^{p m_k} \ x_{j_1}^{p-1} [x_{j_1}, x_{j_2}] x_{j_2}^{p-1} x_{j_3}^{p-1} [x_{j_3}, x_{j_4}] x_{j_4}^{p-1} \dots \, x_{j_{2 \ell - 1}}^{p-1} [x_{j_{2 \ell -1}}, x_{j_{2 \ell}}] x_{j_{2 \ell }}^{p-1}
\end{equation*}
such that $ 0 \le \ell \le n$, $ k \ge 0$, $i_1 < i_2 , \dots < i_k$, $j_1 < j_2 < \dots < j_{2 \ell}$, $m_i > 0$ for all $i$. The following lemma follows immediately from \cite[Theorem 2.1]{GrishinTsybulya09}. However, we will deduce it here from Corollary \ref{q_k_D_n} in order to have the paper more self-contained.

\begin{lemma}
\label{D_n}
For each $n \ge 0$, the vector subspace $D_n$ is spanned (as a vector space over $F$) by the set $S^{(n)}$ together with all polynomials $g_1 [g_2, g_3, g_4]$ and $[g_1, g_2]$ $(g_i \in M)$.
\end{lemma}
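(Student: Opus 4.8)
The plan is to establish Lemma~\ref{D_n} by proving two inclusions between $D_n$ and the span of $S^{(n)}$ together with the polynomials $g_1[g_2,g_3,g_4]$ and $[g_1,g_2]$ $(g_i \in M)$. Write $W$ for this span. The inclusion $W \subseteq D_n$ is the easy direction: the polynomials $g_1[g_2,g_3,g_4]$ and $[g_1,g_2]$ are literally among the generators of $D_n$ (as the special case with $g_i \in M \subset A$), and each element of $S^{(n)}$ is exactly a polynomial of the form (\ref{product}) with $\ell \le n$, which is the specialisation of (\ref{product_Dn}) obtained by substituting distinct variables $x_{i_s}, x_{j_t}$ for the $g_i, h_j$. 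So every generator of $W$ sits inside $D_n$.

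The substantive direction is $D_n \subseteq W$. Here I would argue that each generating polynomial of $D_n$ lies in $W$. For the commutator-type generators $g_1[g_2,g_3,g_4]$ and $[g_1,g_2]$ with $g_i \in A$ (rather than $g_i \in M$), one expands each $g_i$ as a linear combination of monomials in $M$ and uses multilinearity of the commutators, reducing to the case $g_i \in M$, which is already in $W$. The real work is the polynomial (\ref{product_Dn}) with $0 \le \ell \le n$: substituting monomials from $M$ for each $g_i$ and $h_j$ and expanding, one wants to rewrite the result modulo the commutator generators (i.e. modulo $T$ together with the $[g_1,g_2]$) as a linear combination of the standard polynomials in $S^{(n)}$. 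This is essentially the same reduction carried out in the proof of Lemma~\ref{C}, but now one must track that the number of commutator factors never exceeds $n$, so that one lands in $S^{(n)}$ and not in the larger set $S$. The key structural fact is that, working modulo $T$, a product of the shape $h^{p-1}[h,h']h'^{p-1}$ behaves like a single ``graded'' building block whose contribution to the commutator-degree is controlled, so expanding a product of $\ell$ such blocks produces terms each having at most $\ell \le n$ commutators.

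The main obstacle I anticipate is precisely this bookkeeping: after substituting arbitrary monomials and collecting variables, the naive rewriting could a priori create extra commutators (for instance when reordering $h_j^{p-1}$ factors past one another modulo $T$), and one must verify that no such step raises the commutator count beyond $n$. This is where Corollary~\ref{q_k_D_n} enters as the sanity check in the opposite direction: it guarantees $q_k \notin D_n$ for $k > n$, so $W$ cannot be too large. Concretely, I would use Corollary~\ref{q_k_D_n} to confirm that the description is tight, namely that $S^{(n)}$ must be cut off at $\ell = n$; if the span of $S^{(n)}$ together with the commutator generators contained some $q_k$ with $k>n$, it would contradict $q_k \notin D_n$ once the inclusion $\mathrm{span}\,S^{(n)} \subseteq D_n$ is known. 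In practice I would organise the argument by first proving $D_n \subseteq W$ via the expansion-and-reduction outlined above, then invoking Lemma~\ref{C} to see that $W \cap \{\text{products of type }(\ref{product})\}$ is spanned exactly by $S^{(n)}$, so that the two spans coincide.
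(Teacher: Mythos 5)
Your easy inclusion ($W \subseteq D_n$, where $W$ denotes the claimed span) is fine, but the substantive direction $D_n \subseteq W$ is not actually proved. You correctly identify the key difficulty --- that expanding a substitution instance of (\ref{product_Dn}) in the normal form of $A/T$ may produce terms with more than $n$ commutator factors --- and then leave it unresolved. The ``key structural fact'' you invoke, that $h^{p-1}[h,h']h'^{p-1}$ behaves like a single block with controlled commutator degree, is exactly what needs proof and is not obvious: already $(x_1x_2)^{p-1}$ differs from $x_1^{p-1}x_2^{p-1}$ modulo $T$ by terms carrying extra commutators, and after multiplying $\ell$ blocks together one must show that the surviving terms with all degrees divisible by $p$ can be rewritten with at most $\ell$ blocks of the special shape $x^{p-1}[x,y]y^{p-1}$. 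Carrying out this reduction is essentially the content of \cite[Theorem 2.1]{GrishinTsybulya09}, which the paper explicitly cites as an alternative route and deliberately avoids. Your use of Corollary \ref{q_k_D_n} as a ``sanity check'' does not close the gap: knowing $q_k \notin D_n$ for $k>n$ shows that $W$ is not too large, but what is needed is that $D_n$ is not larger than $W$.

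The paper's proof makes Corollary \ref{q_k_D_n} the engine of the argument rather than a consistency check, and it runs by contradiction. Given a multi-homogeneous $f \in D_n \subseteq C$, Lemma \ref{C} writes $f = f_1 + f_2 + f_3$ with $f_1$ in the commutator span, $f_2$ a combination of polynomials (\ref{product}) with $\ell \le n$, and $f_3$ a combination with $\ell > n$; since $f_1, f_2$ lie in the claimed span, one may assume $f = f_3 = \sum_t \alpha_t h_t$. Choosing $h_1$ with the fewest commutator blocks and applying the endomorphism sending its commutator variables to $x_1, \dots, x_{2\ell(1)}$ and all other variables to $1$ annihilates every other term, so $h_1 \in D_n$; then the substitution $x_i \mapsto x_i + 1$ followed by extraction of the multi-homogeneous component of degree $p$ in each variable (legitimate because $F$ is infinite and $D_n$ is a $T$-subspace) puts $q_{\ell(1)}$ into $D_n$ with $\ell(1) > n$, contradicting Corollary \ref{q_k_D_n}. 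To salvage your direct approach you would have to import or reprove the Grishin--Tsybulya multiplicative structure theorem, which is substantially more work than this substitution argument.
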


\begin{proof}
It is clear that all polynomials of $S^{(n)}$ and all polynomials $g_1 [g_2, g_3, g_4]$ and $[g_1, g_2]$ $(g_i \in M)$ belong to $D_n$. Therefore, it suffices to check that each polynomial $f \in D_n$ can be written as a linear combination of these polynomials.

Suppose, in order to get a contradiction, that $f \in D_n$ can not be written as a linear combination of elements of $S^{(n)}$ and polynomials of the forms $g_1 [g_2, g_3, g_4]$ and $[g_1, g_2]$ $(g_i \in M)$. Since the field $F$ is infinite, we may assume without loss of generality that $f$ is multi-homogeneous. By Lemma \ref{C}, $f = f_1 + f_2 + f_3$ where $f_1$ is a linear combination of polynomials of the forms $g_1 [g_2, g_3, g_4]$ and $[g_1, g_2]$ $(g_i \in M)$, $f_2$ is a linear combination of polynomials of the form (\ref{product}) with $\ell \le n$ and $f_3$ is a linear combination of polynomials of the form (\ref{product}) with $\ell > n$. Since $f_1, f_2 \in D_n$, we may assume that $f = f_3$. Hence, $f = \sum_{t = 1}^s  \alpha_t h_t$ where $\alpha_t \in F \setminus \{ 0 \}$ for all $t$ and each $h_t$ is a polynomial of the form (\ref{product}) with $\ell > n$, that is,
\[
h_t  = x_{i_{t1}}^{p m_{t1}} \dots \, x_{i_{tk(t)}}^{p m_{tk(t)}} \ x_{j_{t1}}^{p-1} [x_{j_{t1}}, x_{j_{t2}}] x_{j_{t2}}^{p-1}\dots \, x_{j_{t(2 \ell (t) - 1)}}^{p-1} [x_{j_{t (2 \ell (t) -1)}}, x_{j_{t (2 \ell (t))}}] x_{j_{t (2 \ell (t))}}^{p-1}
\]
where $\ell (t) > n$ for all $t$.

Suppose (renumerating the terms $h_t$ if necessary) that $\ell (1) \le \ell (t)$ for all $t$. Let $\phi$ be the endomorphism of $A$ such that $\phi (x_{j_{1r}}) = x_r$ for $r = 1, \dots , 2 \ell (1)$ and $\phi (x_q) = 1$ for all other $x_q$. Then
\[
\phi (h_1) = x_{1}^{p m_{1}} \dots \, x_{2 \ell (1)}^{p m_{2 \ell (1)}} \ x_1^{p-1} [x_1, x_2] x_2^{p-1}\dots \, x_{2 \ell (1) - 1}^{p-1} [x_{2 \ell (1) -1}, x_{2 \ell (1)}] x_{2 \ell (1)}^{p-1}
\]
for some $m_i \ge 0$ $(i = 1, \dots , 2 \ell (1))$. On the other hand, $\phi (h_t) = 0$ for all $t >1$ because, for each $t >1$, there is $j_{t q}$ such that $\phi (x_{j_{t q}}) = 1$ and, therefore,
\[
\phi \big( x_{j_{t1}}^{p-1} [x_{j_{t1}}, x_{j_{t2}}] x_{j_{t2}}^{p-1}\dots \, x_{j_{t(2 \ell (t) - 1)}}^{p-1} [x_{j_{t (2 \ell (t) -1)}}, x_{j_{t (2 \ell (t))}}] x_{j_{t (2 \ell (t))}}^{p-1} \big) = 0.
\]
Thus, $\phi (f) = \alpha_1 h_1$. Since $D_n$ is a $T$-subspace in $A$ and $\alpha_1 \ne 0$, we have $h_1 \in D_n$.

Let $\psi$ be the automorphism of $A$ such that $\psi (x_i ) = x_i + 1$ for all $i$. Then $\psi (h_1) \in D_n$. One can check that
\begin{multline*}
\psi (h_1) + T = ( x_{1} ^{p } + 1)^{m_{1}} \dots \, (x_{2 \ell (1)}^{p } +1)^{m_{2 \ell (1)}}
\\
\times (x_1 + 1)^{p-1} [x_1, x_2] (x_2 +1)^{p-1}\dots \, (x_{2 \ell (1) - 1} + 1)^{p-1} [x_{2 \ell (1) -1}, x_{2 \ell (1)}] (x_{2 \ell (1)} + 1)^{p-1} + T.
\end{multline*}
Note that the multi-homogeneous component $h' + T$ of $\psi (h_1) + T$ of degree $p$ in all variables $x_1, \dots , x_{2 \ell (1)}$ coincides with $q_{\ell (1) } + T$,
\[
h' + T = x_1^{p-1} [x_1, x_2] x_2^{p-1}\dots \, x_{2 \ell (1) - 1}^{p-1} [x_{2 \ell (1) -1}, x_{2 \ell (1)}] x_{2 \ell (1)}^{p-1} + T = q_{\ell (1)} + T.
\]
Since $\psi (h_1) + T \in D_n /T$, we have $h' + T \in D_n/T$, that is, $q_{\ell (1)} + T \in D_n/T$ so $q_{\ell (1)} \in D_n$. This contradicts Corollary \ref{q_k_D_n} because $\ell (1) > n$. The result follows.
\end{proof}

\section{Proof of Theorem \ref{maintheorem}}

Let $1 \cdot F$ denote the linear span of unity $1 \in A$.

\begin{lemma}
\label{C_2(B)}
The vector space $C_2 (B)$ is a direct sum of the vector spaces $1 \cdot F$, $C \cap A^{(p)}$ and $I_{p+1}$,
\begin{equation}
\label{C2B}
C_2 (B) = 1 \cdot F \oplus ( C \cap A^{(p)})  \oplus I_{p+1}.
\end{equation}
\end{lemma}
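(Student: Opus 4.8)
The plan is to translate membership in $C_2(B)$ into two commutator conditions and then analyse them degree by degree in the $z$-grading $A = \oplus_{k \ge 0} A^{(k)}$.

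First I would record that $T_2(B) = U$: since $U$ is a $T_2$-ideal and $B = A/U$, the $2$-graded identities of $B$ are precisely the elements of $U$ (the generic substitution $y_i \mapsto y_i + U$, $z_j \mapsto z_j + U$ sends $f$ to $f + U$, so $f \in T_2(B)$ forces $f \in U$, while the converse uses that $U$ is a $T_2$-ideal). Because $B$ is generated by $B_0 = (A_0 + U)/U$ and $B_1 = (A_1 + U)/U$, an element of $B$ is central if and only if it commutes with all even and all odd elements. Hence, exactly as in the ungraded case, $f \in C_2(B)$ if and only if $[f, y] \in U$ and $[f, z] \in U$, where $y \in Y$ and $z \in Z$ are variables not occurring in $f$.

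Next I would exploit that $T$ is spanned by its $z$-homogeneous components, so $T = \oplus_k (T \cap A^{(k)})$ and therefore the $z$-homogeneous part $U \cap A^{(k)}$ of $U$ equals $0$ for $k < p$, equals $T \cap A^{(p)}$ for $k = p$, and equals $A^{(k)}$ for $k \ge p+1$. Writing $f = \sum_k f^{(k)}$ with $f^{(k)} \in A^{(k)}$, one has $[f^{(k)}, y] \in A^{(k)}$ and $[f^{(k)}, z] \in A^{(k+1)}$, so the two conditions decouple across $z$-degrees. For $k < p$ the condition $[f^{(k)}, y] \in U \cap A^{(k)} = 0$ forces $f^{(k)}$ to commute with the fresh variable $y$ in the free algebra, whence $f^{(k)} \in 1 \cdot F$; as $f^{(k)} \in A^{(k)}$ this yields $f^{(0)} \in 1 \cdot F$ and $f^{(k)} = 0$ for $0 < k < p$. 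For $k = p$ the binding condition is $[f^{(p)}, y] \in T \cap A^{(p)}$, i.e. $[f^{(p)}, y] \in T$; since $T = T(E)$ is a $T$-ideal, this is equivalent to $f^{(p)}$ being a central polynomial of $E$, that is $f^{(p)} \in C \cap A^{(p)}$ (the companion condition $[f^{(p)}, z] \in A^{(p+1)} \subseteq I_{p+1}$ being automatic). For $k \ge p+1$ both $[f^{(k)}, y]$ and $[f^{(k)}, z]$ lie in $I_{p+1} \subseteq U$, so $f^{(k)}$ is unconstrained and contributes to $I_{p+1}$. This proves $C_2(B) \subseteq 1 \cdot F + (C \cap A^{(p)}) + I_{p+1}$.

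The reverse inclusion is then routine: scalars are central; if $g \in C \cap A^{(p)}$ then $[g, y] \in T \cap A^{(p)} \subseteq U$ and $[g, z] \in A^{(p+1)} \subseteq U$, so $g \in C_2(B)$; and $I_{p+1} \subseteq U$ consists of polynomials vanishing in $B$. The sum is direct because its three summands lie in the distinct $z$-graded layers $A^{(0)}$, $A^{(p)}$ and $\oplus_{k \ge p+1} A^{(k)}$ (here $p > 0$). The step needing most care is the $z$-degree-$p$ analysis: one must use that $T$ is $z$-homogeneous to isolate $U \cap A^{(p)} = T \cap A^{(p)}$ and then correctly translate $[f^{(p)}, y] \in T$ into $f^{(p)} \in C$ via the descriptions $T = T(E)$ and $C = C(E)$. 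It is worth emphasising that the $A^{(0)}$-layer collapses to scalars rather than to all of $C \cap A^{(0)}$, precisely because triple commutators of $z$-degree below $p$ are not killed in $B$.
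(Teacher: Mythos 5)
Your proof is correct and follows essentially the same route as the paper's: both reduce membership in $C_2(B)$ to the conditions $[f,y],[f,z]\in U$ for generators/fresh variables, decompose $f$ by $z$-degree, use that $U\cap A^{(k)}=0$ for $k<p$ to force the low-degree components to be scalars, translate the degree-$p$ condition $[f^{(p)},y]\in T\cap A^{(p)}$ into $f^{(p)}\in C$ via the fresh-variable $T$-ideal argument, and note that degrees $\ge p+1$ are unconstrained. The differences (computing $U\cap A^{(k)}$ once and for all, and stating $T_2(B)=U$ explicitly) are purely presentational.
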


\begin{proof}
Since $1 \cdot F + A^{(p)} + I_{p+1} = 1 \cdot F \oplus A^{(p)} \oplus I_{p+1}$, it suffices to prove that $C_2 (B) = 1 \cdot F + ( C \cap A^{(p)})  + I_{p+1}$.

Suppose that $f \in 1 \cdot F + ( C \cap A^{(p)})  +  I_{p+1}$. Since the algebra $B$ is generated by the elements $y_i +U$, $z_i +U$ $(i \in \mathbb N)$, to prove that $f \in C_2 (B)$ it suffices to check that $[f, y_i], [f,z_i] \in U$ for all $i \in \mathbb N$. Since $f \in I_p$, we have $[f, z_i] \in I_{p+1} \subset U$ for all $i$. Hence, it remains to check that $[f, y_i] \in U$ for all $i$.

Let $f = f^{(0)} + f^{(1)} + f^{(2)}$, where $f^{(0)} \in F$, $f^{(1)} \in (C \cap A^{(p)})$ and $f^{(2)} \in I_{p+1}$; then $[f, y_i] = [f^{(1)}, y_i] + [f^{(2)}, y_i]$. Since $f^{(2)} \in I_{p+1}$, we have $[f^{(2)} , y_i] \in I_{p+1} \subset U$. On the other hand, $f^{(1)} \in C$ so $[f^{(1)}, y_i] \in T$. Since $f^{(1)} \in A^{(p)}$, we have $[f^{(1)}, y_i] \in A^{(p)}$ so $[f^{(1)}, y_i] \in (T \cap A^{(p)}) \subset U$. Hence, $[f, y_i] \in U$ for each $i$.

Thus, if $f \in 1 \cdot F + ( C \cap A^{(p)})  +  I_{p+1}$ then $f \in C_2 (B)$, that is, $1 \cdot F + ( C \cap A^{(p)})  +  I_{p+1} \subseteq C_2 (B)$.

\medskip
Now suppose that $f \in C_2 (B)$, that is, $[f, y_i], [f,z_i] \in U$ for all $i \in \mathbb N$.

Let $f = f_0 + f_1 + \dots + f_p + f_{p+1}$, where $f_j \in A^{(j)}$ $(j = 0, 1, \dots , p)$, $f_{p+1} \in I_{p+1}$. Then $[f, y_i] = [f_0, y_i] + \dots + [f_p, y_i] + [f_{p+1}, y_i] \in U$. Since $U \subset I_p$ and $[f_{\ell}, y_i] \in A^{(\ell )}$ $( \ell = 0, 1, \dots , p)$, we have $[f_{\ell}, y_i] = 0$ for all $i \in \mathbb N$ and all $\ell$, $0 \le \ell <p$. It is clear that if $g \in A$ and $[g, y_i] = 0$ for all $i \in \mathbb N$ then $g \in 1 \cdot F$; hence, $f_0 \in 1 \cdot F$ and $f_{\ell}=0$ if $0 < \ell < p$, that is, $f = f_0 + f_p +f_{p+1}$, where $f_0 \in 1 \cdot F$, $f_p \in A^{(p)}$ and $f_{p+1} \in I_{p+1}$. It follows that to prove that $f \in 1 \cdot F + ( C \cap A^{(p)})  + I_{p+1}$ it suffices to check that $f_p \in C $.

Let $g = g(x_1, \dots , x_k) \in A$. We claim that to check that $g \in C$ it suffices to check that $[g, x_j] \in T$ for some $j>k$. Indeed, $g \in C$ if and only if $[g, x_i] \in T$ for all $i$. If $[g, x_j]  \in T$ then $\psi ([g, x_j]) = [\psi (g), \psi (x_j)] \in T$ for each endomorphism $\psi$ of $A$ because $T$ is a $T$-ideal in $A$. For any $i$, take $\psi$ such that $\psi (x_{\ell}) = x_{\ell}$ for all $\ell = 1, 2, \dots , k$ and $\psi (x_j) = x_i$; then $[g, x_i] = [\psi (g(x_1, \dots , x_k), \psi (x_j)] \in T$ so $g \in C$, as claimed.

Now let $f_p = f_p(y_1, \dots ,y_k; z_1, \dots z_k)$. Take $j >k$. Since $f \in C_2 (B)$, we have
\[
[f, y_j] =[f_p, y_j] + [f_{p+1}, y_j] \in U.
\]
Since $f_{p+1} \in I_{p+1}$, we have $[f_{p+1}, y_j] \in I_{p+1} \subset U$ and therefore
\[
[f_p, y_j] \in U = (T \cap A^{(p)}) \oplus I_{p+1} \subset A^{(p)} \oplus I_{p+1} .
\]
Since $[f_p, y_j] \in  A^{(p)}$, we have $[f_p, y_j] \in (T \cap A^{(p)}) \subset T$. By the observation made in the previous paragraph this implies that $[f_p, x_i] \in T$ for all free generators $x_i$ of $A$, that is, $f_p \in C$. It follows that $f \in 1 \cdot F + ( C \cap A^{(p)})  + I_{p+1}$ and, therefore, $C_2 (B) \subseteq 1 \cdot F + ( C \cap A^{(p)})  + I_{p+1}$.

This completes the proof of Lemma \ref{C_2(B)}.
\end{proof}

Let $W_n = 1 \cdot F + (D_n \cap I_p) + I_{p+1}$ $(n \ge 0)$. Since $D_n$ is a $T$-subspace (and therefore a $T_2$-subspace) in $A$ and $I_p, I_{p+1}$ are $T_2$-ideals (and thus $T_2$-subspaces), $W_n$ is a $T_2$-subspace in $A$. On the other hand, $W_n$ is a subalgebra in $A$ because $\big( (D_n \cap I_p) + I_{p+1} \big) \cdot \big( (D_n \cap I_p) + I_{p+1} \big) \subset I_{p+1}$ so $W_n \cdot W_n = W_n$. Hence, $W_n$ is a $T_2$-subalgebra in $A$.

\begin{lemma}
For each $n \ge 0$, the vector subspace $W_n$ of $A$ is a direct sum of the vector subspaces $1 \cdot F$, $D_n \cap A^{(p)}$  and $I_{p+1}$,
\begin{equation}
\label{Wk}
 W_n = 1 \cdot F \oplus (D_n \cap A^{(p)}) \oplus I_{p+1}.
\end{equation}
\end{lemma}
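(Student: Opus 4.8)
The plan is to reduce the claimed decomposition to a single statement about the homogeneity of $D_n$. First I would record that the asserted sum is direct: recall $A = \oplus_{i \ge 0} A^{(i)}$, and here $1 \cdot F \subseteq A^{(0)}$, $D_n \cap A^{(p)} \subseteq A^{(p)}$, and $I_{p+1} = \oplus_{i \ge p+1} A^{(i)}$. Since $0$, $p$ and the integers $\ge p+1$ are pairwise distinct (as $p > 2$), these three subspaces sit in distinct parts of the grading $A = \oplus_i A^{(i)}$, so their sum is automatically direct. It therefore remains only to prove the equality $W_n = 1 \cdot F + (D_n \cap A^{(p)}) + I_{p+1}$, and since by definition $W_n = 1 \cdot F + (D_n \cap I_p) + I_{p+1}$, this amounts to showing
\[
(D_n \cap I_p) + I_{p+1} = (D_n \cap A^{(p)}) + I_{p+1}.
\]

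The inclusion $\supseteq$ is immediate, because $A^{(p)} \subseteq I_p$ gives $D_n \cap A^{(p)} \subseteq D_n \cap I_p$. The content is the reverse inclusion, and for this the key point—the one step with genuine content, which I expect to be the main obstacle—is that $D_n$ is a \emph{homogeneous} subspace for the grading $A = \oplus_i A^{(i)}$ by degree in the variables $z_j$, i.e. $D_n = \oplus_i (D_n \cap A^{(i)})$. I would deduce this from Lemma \ref{D_n}: every polynomial in the spanning set of $D_n$ given there—each element of $S^{(n)}$ and each $g_1[g_2,g_3,g_4]$ and $[g_1,g_2]$ with $g_i \in M$—is homogeneous in its $z$-degree, since a commutator $[g_1,g_2] = g_1 g_2 - g_2 g_1$ and the expansion of $g_1[g_2,g_3,g_4]$ consist of monomials of one and the same multidegree. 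Grouping these generators by $z$-degree then exhibits $D_n$ as the direct sum of its intersections with the $A^{(i)}$. (Alternatively, one could use that $D_n$ is a $T$-subspace over the infinite field $F$, apply the endomorphism $z_j \mapsto \lambda z_j$, $y_i \mapsto y_i$, and separate the components by a Vandermonde argument.)

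With the homogeneity of $D_n$ in hand the reverse inclusion becomes routine. Given $f \in D_n \cap I_p$, write $f = \sum_i f_i$ with $f_i \in A^{(i)}$ the components of $f$ in the grading; by homogeneity of $D_n$ each $f_i$ lies in $D_n \cap A^{(i)}$, and since $f \in I_p$ we have $f_i = 0$ for all $i < p$. Hence $f = f_p + \sum_{i \ge p+1} f_i$ with $f_p \in D_n \cap A^{(p)}$ and $\sum_{i \ge p+1} f_i \in I_{p+1}$, so $f \in (D_n \cap A^{(p)}) + I_{p+1}$, as required. Combining the two inclusions yields the displayed equality, and together with the directness established above this gives the decomposition (\ref{Wk}).
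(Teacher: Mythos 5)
Your proof is correct and takes essentially the same route as the paper's: both reduce the claim to the fact that $D_n$ is homogeneous with respect to the $z$-degree grading $A = \oplus_{i}A^{(i)}$, deduced from the homogeneous spanning set supplied by Lemma \ref{D_n}, after which the equality $(D_n \cap I_p) + I_{p+1} = (D_n \cap A^{(p)}) \oplus I_{p+1}$ and the directness of the sum are immediate.
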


\begin{proof}
Note that $D_n$ is spanned over $F$ by all polynomials (\ref{product2}) together with all polynomials $g_1 [g_2, g_3, g_4]$ and $[g_1, g_2]$, where all $g_i \in M$ are monic monomials in $x_i$ $(i \in \mathbb N)$. Since each of these polynomials belongs to $A^{(s)}$ for a suitable $s \in \mathbb N$, we have
\[
D_n = (D_n \cap A^{(0)}) \oplus (D_n \cap A^{(1)}) \oplus \dots \oplus (D_n \cap A^{( \ell )}) \oplus \dots
\]
It follows that $D_n \cap I_p = (D_n \cap A^{(p)}) \oplus \dots \oplus (D_n \cap A^{( \ell )}) \oplus \dots $ so  $(D_n \cap I_p) + I_{p+1} = (D_n \cap A^{(p)}) \oplus I_{p+1}$. Thus,
\[
W_n = 1 \cdot F + (D_n \cap I_p) + I_{p+1} = 1 \cdot F \oplus (D_n \cap A^{(p)}) \oplus I_{p+1},
\]
as required.
\end{proof}

Now we are in a position to complete the proof of Theorem \ref{maintheorem}. Since $C = \bigcup_{n=0}^{\infty} D_n$, we have $C \cap A^{(p)} = \bigcup_{n=0}^{\infty} (D_n \cap A^{(p)})$ so, by (\ref{C2B}) and (\ref{Wk}),
\begin{equation}
\label{union}
C_2 (B) = \bigcup_{n=0}^{\infty} W_n.
\end{equation}
Note that $D_0 \cap A^{(p)} \varsubsetneqq D_1 \cap A^{(p)} \varsubsetneqq \dots \varsubsetneqq D_n \cap A^{(p)} \varsubsetneqq \dots $ because, by Lemma \ref{q_n+1_D_n},
\[
q_n (z_1,y_2, \dots , y_{2n-1}, y_{2n}) = z_{1}^{p-1} [z_{1}, y_{2}] y_{2}^{p-1} y_{3}^{p-1} [y_{3}, y_{4}] y_{4}^{p-1} \dots \, y_{2 n - 1}^{p-1} [y_{2 n -1}, y_{2 n}] y_{2 n}^{p-1}
\]
belongs to $(D_n \cap A^{(p)}) \setminus (D_{n-1} \cap A^{(p)})$. Hence,
\begin{equation}
\label{chain}
W_0 \varsubsetneqq W_1 \varsubsetneqq \dots \varsubsetneqq W_n \varsubsetneqq \dots .
\end{equation}
By (\ref{union}) and (\ref{chain}), the $T_2$-subalgebra $C_2 (B)$ is not finitely generated (as a $T_2$-subalgebra in $A$). This completes the proof of Theorem \ref{maintheorem}.

\textbf{Remark.} Theorem \ref{maintheorem} and most of its proof remain valid if $F$ is a finite field of characteristic $p>2$. In this case the $T$-subspace $C$ of $A$ in Section 2 should be defined by $C = C(A/T)$. Note that for a finite field $F$ we have $T \ne T(E)$ and $C \ne C(E)$, see \cite{BOR11}.

\section*{Acknowledgements}

The second author was supported by CNPq grants 307328/2012-0 and 480139/2012-1, by DPP/UnB and by CNPq-FAPDF PRONEX grant 2009/00091-0(193.000.580/2009)


\begin{thebibliography}{10}

\bibitem{BOR10}
C. Bekh-Ochir, S.A. Rankin, \textit{The central polynomials of the infinite dimensional unitary and nonunitary Grassmann algebras}, J. Algebra Appl. 9 (2010), 687--704.

\bibitem{BOR11}
C. Bekh-Ochir, S.A. Rankin, \textit{The identities and the central polynomials of the infinite dimensional unitary Grassmann algebra over a finite field}, Comm. Algebra \textbf{39} (2011), 819--829.

\bibitem{Belov99}
A.Ya. Belov, \textit{On non-Specht varieties} (Russian), Fundam. Prikl. Mat. \textbf{5} (1999), 47--66.

\bibitem{Belov00}
A.Ya. Belov, \textit{Counterexamples to the Specht problem}, Sb.: Math. \textbf{191} (2000), 329--340.

\bibitem{Belov10}
A.Ya. Belov, \textit{The local finite basis property and the local representability of varieties of associative rings}, Izv.: Math. \textbf{74} (2010), 1--126.

\bibitem{BKKS}
A. Brand\~ao Jr., P. Koshlukov, A. Krasilnikov, E.A. Silva, \textit{The central polynomials for the Grassmann algebra,} Israel J. Math. \textbf{179} (2010), 127-144.

\bibitem{drbook}
V. Drensky, \textit{Free algebras and PI-algebras}. Graduate course in  algebra, Springer, Singapore, 1999.

\bibitem{drformanekbook}
V. Drensky, E. Formanek, \textit{Polynomial identity rings}. Advanced Courses in Mathematics. CRM Barcelona. Birkh\"auser Verlag, Basel, 2004.

\bibitem{GiambrunoKoshlukov01} A. Giambruno, P. Koshlukov, \textit{On the identities of the Grassmann algebras in characteristic $p>0$}, Israel J. Math. \textbf{122} (2001), 305--316.

\bibitem{gz}
A. Giambruno, M. Zaicev, \textit{Polynomial identities and asymptotic methods}. Mathematical Surveys and Monographs, \textbf{122}. American Mathematical Society, Providence, RI, 2005.

\bibitem{GKS12}
D.J. Gon\c{c}alves, A. Krasilnikov, I. Sviridova, \textit{Limit $T$-subspaces and the central polynomials in $n$ variables of the Grassmann algebra}, J. Algebra \textbf{371} (2012), 156--174.

\bibitem{GKS14}
D.J. Gon\c{c}alves, A. Krasilnikov, I. Sviridova, \textit{Limit $T$-subalgebras in free associative algebras}, J. Algebra \textbf{412} (2014), 264--280.

\bibitem{Grishin99}
A.V. Grishin, \textit{Examples of $T$-spaces and $T$-ideals of characteristic $2$ without the finite basis property}, (Russian), Fundam. Prikl. Mat.  \textbf{5}  (1999),  101--118.

\bibitem{Grishin00}
A.V. Grishin, \textit{On non-Spechtianness of the variety of associative rings that satisfy the identity $x\sp {32}=0$}, Electron. Res. Announc. Amer. Math. Soc.  \textbf{6}  (2000), 50--51 (electronic).

\bibitem{Grishin10}
A.V. Grishin, \textit{On the structure of the centre of a relatively free Grassmann algebra}, Russ. Math. Surv. \textbf{65} (2010), 781--782.

\bibitem{GrishinTsybulya09}
A.V. Grishin, L.M. Tsybulya, \textit{On the multiplicative and T-space structure of the relatively free Grassmann algebra}, Sb.: Math.  \textbf{200} (2009), 1299--1338.

\bibitem{k-brbook} A. Kanel-Belov, L.H. Rowen, \textit{Computational aspects of polynomial identities}. Research
Notes in Mathematics, \textbf{9}. A~K~Peters, Ltd., Wellesley, MA, 2005.

\bibitem{kemerbook}
A.R. Kemer, \textit{Ideal of identities of associative algebras}, Translations of Mathematical Monographs, \textbf{87}. American Mathematical Society, Providence, RI, 1991.

\bibitem{rowenbook}
L.H. Rowen, \textit{Polynomial Identities in Ring Theory}. Pure and Applied Mathematics, \textbf{84}, Acad. Press, New York-London, 1980.

\bibitem{Shchigolev99}
V.V. Shchigolev, \textit{Examples of infinitely based $T$-ideals} (Russian), Fundam. Prikl. Mat. \textbf{5} (1999), 307--312.

\bibitem{Shchigolev00}
V.V. Shchigolev, \textit{Examples of infinitely  basable $T$-spaces} Sb.: Math. \textbf{191} (2000), 459--476.

\bibitem{Shchigolev01}
V.V. Shchigolev, \textit{Finite basis property of $T$-spaces over fields of characteristic zero}, Izv.: Math. \textbf{65} (2001), 1041--1071.

\end{thebibliography}
\end{document}